\documentclass[twoside, 11pt]{article}
\usepackage{amsmath,amssymb,amsthm,mathrsfs}
\usepackage[margin=2.5cm]{geometry} 
\usepackage{lipsum}
\usepackage{titlesec,hyperref}
\usepackage{fancyhdr}
\usepackage[numbers,sort&compress]{natbib}
\usepackage{color}

\pagestyle{fancy}
\fancyhf{}
\fancyhead[CO]{\footnotesize\it Global existence of weak solutions for generalized viscous quantum MHD equation}
\fancyhead[CE]{\footnotesize\it B.L.Guo,B.Q.Xie}
\fancyfoot[CE,CO]{\footnotesize\rm\thepage}
\fancypagestyle{plain}
{
\fancyhf{}

}

\linespread{1.1}

\titleformat{\subsection}{\it}{\thesubsection.\enspace}{1.5pt}{}
\titleformat{\subsubsection}{\it}{\thesubsubsection.\enspace}{1.5pt}{}

\newtheorem{theo}{Theorem}[section]
\newtheorem{lemm}[theo]{Lemma}
\newtheorem{defi}[theo]{Definition}

\newtheorem{rema}{Remark}[section]
\numberwithin{equation}{section}

\allowdisplaybreaks

\def\th2{\frac{\theta}{2}}

\begin{document}

\title{\hspace{-4mm} Global existence of weak solutions for generalized  quantum MHD equation}
\author{ Boling Guo$^1$, Binqiang Xie$^2*$}
\date{}
\maketitle
\begin{center}
\begin{minipage}{120mm}
\emph{\small $^1$Institute of Applied Physics and Computational Mathematics, China Academy of Engineering Physics,
 Beijing, 100088, P. R. China \\
$^2$Graduate School of China Academy of Engineering Physics, Beijing, 100088, P. R. China }
\end{minipage}
\end{center}

\footnotetext{Email: \it gbl@iapcm.ac.cn(B.L.Guo), \it xbq211@163.com(B.Q.Xie).}
\date{}

\maketitle

\begin{abstract}
We prove the existence of a weak solution to a generalized quantum MHD equation in a 2-dimensional periodic box for large initial data.  The existence of a global weak
solution is established through a three-level approximation, energy estimates, and weak convergence for
the adiabatic exponent $\gamma>1$.

\vspace*{5pt}
\noindent{\it {\rm Keywords}}:
weak solutions; MHD equation; quantum hydrodynamic.

\vspace*{5pt}
\noindent{\it {\rm 2010 Mathematics Subject Classification}}:
76W05, 35Q35, 35D05, 76X05.
\end{abstract}


\section{Introduction}
\quad The evolution of quantum MHD equations in $\Omega= T^{2}$ is described by the following system
\begin{subequations}\label{1.1}
\begin{eqnarray}
&\partial_{t}n +{\rm div}(n u) = 0, \label{1.1a} \\
&\partial_{t}(n u)+{\rm div}(n u\otimes u)
  +\nabla (P(n)+P_{c}(n))- 2 {\rm div} (\mu(n) D(u)) \\   \nonumber
 &- \nabla (\lambda (n) {\rm div} u)  - \frac{\hbar^{2}}{2} n \nabla (\varphi^{\prime}(n) \Delta \varphi(n))- (\nabla \times B)\times B=0, \label{1.1b} \\
&\partial_{t} B - \nabla \times (u\times B) + \nabla \times (\nu_{b}(\rho)B \times B)=0,\label{1.1c}\\
& n(x,0)= n_{0}(x), ~nu(x,0)=m_{0},\label{1.1d}\\
& B(x,0)= B_{0}(x),~{\rm div} B_{0}=0,\label{1.1e}
\end{eqnarray}
\end{subequations}
where the functions $n, u $ and $B$
represent the mass density,the velocity field, the magnetic field.
$P(n)=n^{\gamma}$ stands for the pressure, $P_{c}$ is a singular continuous function and called cold pressure. $\mu(n), \lambda(n)$ denote the fluid viscosity coefficient.  $\hbar>0$ quantum plank constant, $\nu_{b}$ the magnetic viscosity coefficient.

Our analysis is based on the following physically grounded assumptions:

[A1]The viscosity coefficient is determined by the Newton's rheological law
\begin{equation}\label{1.2}
\mu(n)=\mu_{0} n^{\alpha},~0<\alpha\leq 1,  \lambda(n)=2(n\mu^{\prime}(n)- \mu(n)).
\end{equation}
where $\mu$ and $\lambda$ are respectively the shear and bulk constant viscosity coefficients, the dispersion term $\varphi$ satisfying
\begin{equation}\label{1.3}
\varphi(n)=n^{\alpha},
\end{equation}

[A2]The cold pressure $P_{c}$ obeys the following growth assumption:
\begin{equation}\label{1.4}
\lim_{n\rightarrow 0}P_{c}(n)= + \infty,
\end{equation}
More precisely, we assume
\begin{equation}\label{1.5}
P_{c}^{\prime}(n)=\left\{
\begin{aligned}
&c_{1} n^{-\gamma^{-}-1}~~n\leq 1,\\
&c_{2} n^{\gamma-1} ~~n >1,
\end{aligned}
\right.
\end{equation}
where constants $\gamma^{-}, \gamma^{+}\geq 1, c_{1},c_{2}>0$.

[A3]The positive coefficient  $\nu_{b}$ is supposed to be a continuous function of the density, bounded from above and taking large values for small and large densities. More precisely, we assume that there exists $B>0$, positive constants $d_{0},d_{0}^{\prime}, d_{1},d_{1}^{\prime}$ large enough, $2\leq a<a^{\prime}<3$ and $b\in [0,\infty]$ such that
\begin{equation}\label{1.6}
\forall s<B,~~\frac{d_{0}}{s^{a}}\leq \nu_{b}(s)\leq \frac{d_{0}^{\prime}}{s^{a^{\prime}}}~~and ~~\forall s\geq B, d_{1}\leq \nu_{b}(s) \leq d_{1}^{\prime}s^{b}.
\end{equation}

Define function $H(n)$ and $\xi(n)$ as follow :
\begin{equation}\label{1.7}
\left\{
\begin{aligned}
&nH^{\prime}(n)-H(n)=P(n),~~nH^{\prime}_{c}(n)-H_{c}(n)=P_{c}(n)\\
&n\xi^{\prime}(n)=\mu^{\prime}(n),
\end{aligned}
\right.
\end{equation}

Now, we give the definition of a weak solution to \eqref{1.1}.

\begin{defi}
We call $(n, u, B)$ is as  a  weak solution to the problem \eqref{1.1}, if the following is satisfied.

(1)the density $n$ is a non-negative function satisfying the internal identity
\begin{equation}\label{1.8}
\int_{0}^{T} \int_{\Omega} n \partial_{t} \phi + n u\cdot \nabla \phi dx dt+ \int_{\Omega} n_{0} \phi(0)dx=0,
\end{equation}
for any test function $\phi\in C^{\infty}([0,T]\times \overline{\Omega}), \phi(T)=0$.

(2)  the momentum equation in \eqref{1.1b}
holds in $D^{\prime}((0,T)\times\Omega)$(in the sense of distributions), that means,
\begin{equation}\label{1.9}
\begin{aligned}
&\int_{\Omega}m_{0} \phi(0) dx+\int_{0}^{T} \int_{\Omega} n u \cdot \partial_{t} \phi + n (u\otimes u): \nabla \phi+ P {\rm div} \phi dx dt\\
&=\frac{\hbar^{2}}{2} \int_{0}^{T} \int_{\Omega} \varphi^{\prime}(n) \Delta \varphi(n) \nabla n \phi+ n \phi^{\prime} \Delta \varphi(n) {\rm div} \phi dx dt + 2\int_{0}^{T} \int_{\Omega} \mu(n) D(u) \nabla \phi dx dt\\
& + \int_{0}^{T} \int_{\Omega}\lambda(n) {\rm div} u {\rm div} \phi dx dt- \nu_{b} \int_{0}^{T} \int_{\Omega}(\nabla \times B) \times B \cdot \phi dx dt ,
\end{aligned}
\end{equation}
for any test function $\phi\in C^{\infty}([0,T]\times \overline{\Omega}), \phi(T)=0$.

(3) the magnetic field $B$ is a non-negative function satisfying
\begin{equation}\label{1.10}
\begin{aligned}
&\int_{\Omega} B_{0}  \phi(0)dx =\int_{0}^{T} \int_{\Omega} (B \cdot \partial_{t}\phi +  (u \times B) \cdot(\nabla \times \phi)- \nu_{b} \nabla B :\nabla \phi) dx dt ,
\end{aligned}
\end{equation}
for any test function $\phi\in C^{\infty}([0,T]\times \overline{\Omega}), \phi(T)=0$.

\end{defi}

\begin{rema}
If $\mu(n)=0, \lambda(n)=0, \alpha=\frac{1}{2}, B=0, P_{c}(n)=0, $, then quantum hydrodynamic equation \eqref{1.1} becomes
\begin{subequations}\label{1.11}
\begin{eqnarray}
&\partial_{t}n +{\rm div}(n u) = 0, \label{1.9a} \\
&\partial_{t}(n u)+{\rm div}(n u\otimes u)
  +\nabla P(n)- \frac{\hbar^{2}}{2} n \nabla (\frac{\Delta \sqrt{n}}{\sqrt{n}})=0, \label{1.9b}
\end{eqnarray}
\end{subequations}
If $\mu(n)=0, \lambda(n)=0, \alpha=1, B=0, P_{c}(n)=0, \nu=0$, then quantum hydrodynamic equation \eqref{1.1} becomes
\begin{subequations}\label{1.12}
\begin{eqnarray}
&\partial_{t}n +{\rm div}(n u) = 0, \label{1.10a} \\
&\partial_{t}(n u)+{\rm div}(n u\otimes u)
  +\nabla P(n)- \sigma n \nabla \Delta n=0, \label{1.10b}
\end{eqnarray}
\end{subequations}
\end{rema}

Now, we are ready to formulate the main result of this paper.

\begin{theo} (global existence for the quantum Euler model)
Let $\Omega =T^{2}$ be a periodic box.  Assume $ T>0$.
Let the initial data satisfy
\begin{equation}\label{1.13}
\left\{
\begin{aligned}
&\frac{1}{2}\int_{T^{2}} (\frac{|m|^{2}}{2n_{0}} dx + [H(n_{0})+ H_{c}(n_{0})]+ \frac{\hbar^{2}}{2} |\nabla \varphi(n_{0})|^{2} +
\int_{T^{d}} |B_{0}|^{2})dx <+\infty,\\
&\frac{\nabla \mu(n_{0})}{\sqrt{n_{0}}} \in L^{2}(\Omega), \
\end{aligned}
\right.
\end{equation}
 Then problem \eqref{1.1}-\eqref{1.10} posses at least one  global weak solution $n,u, B$.
\end{theo}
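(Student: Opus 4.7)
The plan is to follow the classical three-level approximation strategy for degenerate compressible fluid models. First, I would regularize \eqref{1.1} with three small parameters: an artificial viscosity $\varepsilon \Delta n$ in the continuity equation, a higher-order dissipation $\delta \Delta^{2} u$ together with extra drag terms $r_{0} u + r_{1} n |u|^{2} u$ in the momentum equation, and a truncation of the singular cold pressure $P_{c}$ and of the magnetic coefficient $\nu_{b}$ near zero and infinity. The innermost approximate system would be solved by a Faedo--Galerkin scheme in $u$ coupled with a Schauder fixed-point argument for $n$ and $B$, producing smooth approximate solutions $(n_{\eta}, u_{\eta}, B_{\eta})$.

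Next, I would establish two families of a priori estimates, uniform in the approximation parameters. The classical energy estimate, obtained by testing \eqref{1.1b} with $u$ and \eqref{1.1c} with $B$ and combining with \eqref{1.1a} through \eqref{1.7}, yields bounds on $\sqrt{n}\,u$ and $\hbar \nabla \varphi(n)$ in $L^{\infty}_{t} L^{2}_{x}$, on $H(n)+H_{c}(n)$ in $L^{\infty}_{t} L^{1}_{x}$, on $\sqrt{\mu(n)}\,D(u)$ in $L^{2}_{t,x}$, on $B$ in $L^{\infty}_{t} L^{2}_{x}$, and on $\sqrt{\nu_{b}(n)}\,\nabla B$ in $L^{2}_{t,x}$. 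The Bresch--Desjardins entropy, obtained by testing momentum with $\nabla \xi(n)$, produces the key bound on $\nabla \mu(n)/\sqrt{n}$, on $\sqrt{n}\,\nabla^{2} \varphi(n)$, and ultimately on higher derivatives of $n^{\alpha/2}$; this is what makes the degenerate viscosity tractable when passing to the limit.

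The passage to the limit is the technical heart of the argument. Strong convergence of $n$ follows from Aubin--Lions applied to the continuity equation using the BD control of $\nabla n^{\alpha/2}$. Strong convergence of $\sqrt{n}\,u$ in $L^{2}_{t,x}$ then follows from negative Sobolev control of $\partial_{t}(nu)$ combined with the energy bounds. To secure equi-integrability of the convective flux $n u \otimes u$, I would derive a Mellet--Vasseur type estimate on $n(1+|u|^{2})\log(1+|u|^{2})$. The cold pressure $P_{c}$ is decisive here: the blow-up prescribed by \eqref{1.5} as $n \to 0$ prevents concentration of mass on vacuum sets, so that $u=(nu)/n$ is well-defined almost everywhere and the singular products $\nu_{b}(n) B$ and $\varphi'(n)\Delta \varphi(n)$ do not generate uncontrolled contributions near $\{n=0\}$. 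The magnetic coupling is handled by combining strong $L^{2}$ convergence of $B$, from the uniform $\nabla B$ bound, with pointwise convergence of $\nu_{b}(n)$ inherited from that of $n$; the growth assumptions in \eqref{1.6} are calibrated precisely so that the resulting products remain equi-integrable.

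The main obstacle is the simultaneous identification of the weak limits of the quantum dispersion term $n \nabla(\varphi'(n)\Delta \varphi(n))$ and of the magnetic dissipation $\nabla \times (\nu_{b}(n) B \times B)$. For the quantum term I would rewrite it in divergence form using \eqref{1.7}, so that the limit can be read off from the $H^{2}$-type bounds on $n^{\alpha}$ provided by the BD entropy, together with the strong convergence of $n$. The magnetic term requires lower semicontinuity of $\int \nu_{b}(n)|\nabla B|^{2}$ along the approximating sequence and careful treatment of the blow-up set of $\nu_{b}$; the assumption $2 \leq a < a' < 3$ in \eqref{1.6} is exactly what is needed to close the argument. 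I expect this final identification, together with transferring the Mellet--Vasseur estimate through the magnetic coupling, to be the most delicate technical hurdle.
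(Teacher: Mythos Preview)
Your broad strategy --- multi-level approximation, energy plus Bresch--Desjardins entropy, then compactness --- matches the paper's, but the implementation differs in several places worth noting. The paper regularizes with two (not three) parameters: an artificial viscosity $\varepsilon\Delta n$ in the continuity equation and high-order operators $\lambda\, n\nabla\Delta^{2s+1}(nu)$ and $\lambda\, n\nabla\Delta^{2s+1} n$ in the momentum equation (in the style of Mucha--Pokorn\'y--Zatorska), rather than your $\delta\Delta^{2}u$ and drag terms $r_{0}u+r_{1}n|u|^{2}u$. There is no truncation of $P_{c}$ or $\nu_{b}$, and the innermost fixed point is Banach, not Schauder. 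More substantively, the paper never invokes a Mellet--Vasseur $n|u|^{2}\log(1+|u|^{2})$ estimate: the cold pressure \eqref{1.5} already forces $n^{-1/2}\in L^{\infty}_{t}L^{6}_{x}$ (Lemma~5.1), so vacuum is excluded in an integrated sense and the convective term is handled directly. Your route via Mellet--Vasseur is in principle viable but introduces an extra difficulty --- the Lorentz force tested against $(1+\log(1+|u|^{2}))u$ --- that the paper simply avoids.

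The one point you underplay is precisely where the dimension $d=2$ enters. In the paper the critical step in closing the B--D entropy is controlling $\int (\nabla\times B)\times B\cdot\nabla\phi(n)$, which is bounded via the two-dimensional embedding $W^{1,1}\hookrightarrow L^{2}$ applied to $\nabla\mu(n)\times B$ (see the estimate around \eqref{4.20}--\eqref{4.21}); this, together with the growth profile \eqref{1.6} for $\nu_{b}$, is what allows the Lorentz contribution to be absorbed by the resistivity and by $\|\Delta\mu(n)\|_{L^{2}}$ via Gronwall. Your discussion of the magnetic coupling focuses on equi-integrability and lower semicontinuity in the limit passage, which is fine, but the sharper obstacle --- and the actual reason the theorem is stated on $T^{2}$ --- is this $W^{1,1}\subset L^{2}$ step inside the B--D estimate itself. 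You should make that explicit, since without it the entropy inequality does not close.
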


This paper is organized as follows.
In section $2$, we establish the global existence
of solutions to the Faedo-Galerkin approximation to \eqref{1.1}. In section $3$ we deduce the B-D entropy energy estimates, which is a key part in the analysis process. In section $4$ and $5$, we use the uniform estimates
 to recover the original system by vanishing the artificial viscosity and artificial pressure respectively,
  therefore the main theorem is proved by using the weak convergence method.

\section{Faedo-Galerkin approximation}
\quad In this section, we prove the existence of solutions to approximate viscous quantum Euler equations. We proceed similarly as [16. Chap. 7].
\subsection{Local existence of solutions.}
Let $T>0$, and let $(e_{k})$ be an orthonormal basis of $L^{2}(T^{d})$ which is also an orthogonal basis of $H^{1}(T^{d})$. Introduce the finite-dimensional space $X_{N}=span \{ e_{1}, e_{2}, ..., e_{N} \}$,  $N\in \mathbb{N}$. Let $(n_{0},u_{0}, B_{0})\in C^{\infty}(T^{d})^{3}$  be some initial data satisfying $n_{0}\geq \delta>0$ for $x\in T^{d}$ for some $\delta>0$, and let the velocity $u \in C^{0}([0,T]; X_{n})$ be given. We notice that $u$ can be written as
\begin{equation}\label{2.1}
u(x,t)= \sum_{j=1}^{N} \lambda_{j} (t) e_{j}(x), ~~(x,t)\in T^{d} \times [0,T]
\end{equation}
for some function $\lambda_{i}(t),$ and the norm of $u$ in $C^{0}([0,T];X_{N})$ can be formulated as
\begin{equation*}
\|u\|_{C^{0}([0,T];X_{N})} = \max_{t\in[0,T]} |\sum_{j=1}^{N} \lambda_{j}(t)|,
\end{equation*}
As a consequence, $u$ can be bounded in $C^{0}([0,T]; C^{k}(T^{d}))$ for any $k\in \mathbb{N}$, and there exists a constant $C>0$ depending on $k$ such that
\begin{equation}\label{2.2}
\|u\|_{C^{0}([0,T];C^{k}(T^{d}))} \leq C \|u\|_{C^{0}([0,T];L^{2}(T^{d}))}.
\end{equation}
Therefore there exists solution operator $F: C^{0}([0,T];X_{N})\rightarrow C^{0}([0,T];C^{3}(T^{d}))$ such that $n=F(u)$ be that the classical solution to
\begin{equation}\label{2.3}
n_{t} + {\rm div} (n u)= \varepsilon \Delta n,~n(x,0)=n_{0}~T^{d}\times(0,T)
\end{equation}
The maximum principle provides the lower and upper bounds
\begin{equation}\label{2.4}
\begin{aligned}
&\inf_{x\in T^{d}} n_{0}(x) \exp (-\int_{0}^{t}\|{\rm div} u\|_{L^{\infty}(T^{d})}ds) \leq n(x,t)\\
&\leq \sup_{x\in T^{d}} n_{0}(x) \exp (\int_{0}^{t}\|{\rm div} u\|_{L^{\infty}(T^{d})}ds),~for~(x,t)\in T^{d}\times [0,T].
\end{aligned}
\end{equation}
Since the equation is linear with respect to $n$, the operator $F$ is Lipschitz continuous in the following sense:
\begin{equation}\label{2.5}
\|F(v_{1})-F(v_{2})\|_{C^{0}([0,T];C^{k}(T^{d}))} \leq C \|v_{1}-v_{2}\|_{C^{0}([0,T];L^{2}(T^{d}))}.
\end{equation}
Since we assumed that $n_{0}(x)\geq \delta>0$, $n(t,x)$ is strictly positive. In view of (2.1), for $\|v\|_{C^{0}([0,T];L^{2}(T^{d}))}\leq c$, there exist constants $\underline{n}(c)$ and $\overline{n}(c)$ such that
\begin{equation}\label{2.6}
0 < \underline{n}(c, \varepsilon)\leq n(x,t) \leq \overline{n}(c, \varepsilon).
\end{equation}

Next, we wish to obtain the solvability of the magnetic field on the space $X_{N}$. To this end, for given $u$ above, we are looking for a unique function $B$ satisfying
\begin{subequations}\label{2.7}
\begin{eqnarray}
&\partial_{t} B - \nabla \times (u\times B) + \nabla \times (\nu_{b} \times B)=0,\label{1.1c}\\
& {\rm div} B=0,\label{1.1d}\\
& B(x,0)= B_{0}(x),\label{1.1e}
\end{eqnarray}
\end{subequations}
which is a linear parabolic-type problem in $B$. Therefore, by the standard Faedo-Galerkin methods, there exists a solution
\begin{equation}\label{2.8}
B\in L^{2}([0,T]; H^{1}(T^{3}))\cap L^{\infty}([0,T];L^{2}(T^{3})).
\end{equation}
to Eqs.(2.7). Further, there exists a continuous solution operator $G: C^{0}([0,T]; X_{N})\rightarrow L^{2}([0,T]; H^{1}(T^{3})) \cap L^{\infty}([0,T]; L^{2}(T^{3}))$ by $G(v)=B$.

Now, for all test function $\psi\in C([0,T];X_{N}) $ satisfying $\psi(\cdot,T)=0$, we wish to solve the momentum equation on the space $X_{N}$. To this end, for given $n=F(u), B=G(u)$, we are looking for a function $u_{N}\in C^{0}([0,T]; X_{N})$ such that
\begin{equation}\label{2.9}
\begin{aligned}
&-\int_{\Omega} n_{0}u_{0}\psi(\cdot,0)dx= \int_{0}^{T}\int_{T^{d}} ( nu_{N} \cdot \psi_{t} + (nu\otimes u_{N}): \nabla \psi+ P(n) {\rm div} \psi dx dt \\
&- \lambda \int_{0}^{T}\int_{T^{d}} \Delta^{s+1} (nu) : \Delta^{s}(n\psi) dx dt -\frac{\hbar^{2}}{2} \int_{0}^{T}\int_{T^{d}} (\varphi^{\prime}(n) \Delta \varphi(n) \nabla n \psi+ n \varphi^{\prime}(n) \Delta \varphi(n) {\rm div} \psi) dx dt\\
&-\lambda \int_{0}^{T} \int_{T^{d}} \Delta^{s} ({\rm div} (n \psi)): \Delta ^{s+1} n dx dt- 2\int_{0}^{T} \int_{T^{d}} \mu(n) D(u_{N}) \cdot \nabla \psi dx dt- \varepsilon \int_{0}^{T} \int_{T^{d}} (\nabla n \nabla \cdot) u \psi dx dt\\
& - \int_{0}^{T} \int_{T^{d}} \lambda(n) {\rm div} u_{N} \cdot {\rm div} \psi dx dt+ \mu_{b} \int_{0}^{T}\int_{T^{d}} (\nabla \times B)\times B \cdot \psi dxdt
\end{aligned}
\end{equation}
we will apply Banach's fixed point theorem to prove the local-in-time existence of solutions in the above equation. The regularization yields the $H^{1}$ regularity of $u_{N}$ which is needed to conclude the global existence of solutions.

To solve (2.9), we follow Ref.6 and consider a family of linear operators, given a function $\rho \in L^{1}(T^{d})$ with $\rho\geq \underline{\rho}>0$
\begin{equation*}
M[\rho]: X_{N}\rightarrow X_{N}^{\star},~~<M[n]v,u>=\int_{T^{d}} nv\cdot u dx, ~~v,u\in X_{N}.
\end{equation*}
where the symbol $X_{N}^{\star}$ stands for the dual space of $X_{N}$. It is easy to see that the operator $M$ is invertible provided $n$ is strictly positive on $T^{d}$ , and
\begin{equation*}
\| M^{-1}[n]\|_{L(X_{N}^{\star}, X_{N})} \leq \underline{\rho}^{-1},
\end{equation*}
where $L(X_{N}^{\star}, X_{N})$ is the set of bounded linear mappings from $X_{N}^{\star}$ to $X_{N}$. Moreover, the identity
\begin{equation*}
M^{-1}[n_{1}]-M^{-1}[n_{2}]= M^{-1}[n_{2}]
(M[n_{1}]-M[n_{2}])M^{-1}[n_{1}],
\end{equation*}
can be used to get
\begin{equation*}
\|M^{-1}[n_{1}]-M^{-1}[n_{2}]\|_{L(X_{N}^{\star}, X_{N})} \leq C(N,\underline{n}) \|n_{1}-n_{2}\|_{L^{2}(T^{d})},
\end{equation*}
for any $n_{1},n_{2}$ such that
\begin{equation*}
\inf_{T^{d}} n_{1} \geq n_{0} >0,~~\inf_{T^{d}} n_{2} \geq n_{0} >0,
\end{equation*}
So, $M^{-1}$ is Lipschitz continuous in the sense of (2.8).

Now the integral equation (2.9) can be rephrased as an ordinary differential equation on the finite-dimensional space $X_{N}$
\begin{equation*}
\frac{d}{dt}(M[n(t)u_{N}(t)])= N[v,u_{N},n,B]
\end{equation*}
where $n=F(u), B=G(u)$ and
\begin{equation}\label{2.10}
\begin{aligned}
&<N[u,u_{N},n,B]>= \int_{0}^{T} \int_{T^{d}} (n_{N}u_{N}\otimes u_{N}: \nabla \psi + (P(n_{N}+ P_{c}(n_{N})) {\rm div} \psi \\
& - \frac{\hbar^{2}}{2} (\varphi^{\prime}(n_{N})\Delta \psi(n_{N}) \nabla n_{N} \psi + n_{N} \varphi^{\prime} \Delta \varphi(n_{N}) {\rm div} \psi)dx+ \lambda \int_{0}^{T} \int_{T^{d}}n_{N} \nabla \Delta^{2s+1}(n_{N}u_{N})\psi dx dt\\
& \lambda \int_{0}^{T} \int_{T^{d}}  n_{N} \Delta^{s} {\rm div} (n_{N}\psi)\cdot \Delta^{s+1} n_{N} dx - 2 \int_{0}^{T} \int_{T^{d}} \mu(n_{N}) D(u_{N}) \cdot \nabla \psi dx\\
& -\int_{0}^{T} \int_{T^{d}}  \lambda(n_{N}) {\rm div} n_{N} \cdot {\rm div} \psi dx dt + \nu_{b}\int_{0}^{T} \int_{T^{d}} (\nabla \times B_{N})\times B_{N} \cdot \psi dx, ~\psi\in X_{N},
\end{aligned}
\end{equation}
The operator $N[u,u_{N},n,B]$, defined for every $t\in [0,T]$ as an operator from $X_{N}$ to $X_{N}^{\star}$ is continuous in time. Then the existence of a unique solution to (2.9) can be obtained by using standard theory for systems of ordinary equations. In other words, for given $u$, there exists a unique solution $u_{N}\in C^{1}([0,T];X_{N})$ to (2.7). Integrating (2.9) over (0,t) yields the following nonlinear equation:
\begin{equation}\label{2.11}
u_{N}= M^{-1}[F(u_{N})](t) (M[n_{0}] u_{0}+ \int_{0}^{t} N(u_{N},u_{N}(s),n_{N},B_{N}))dt
\end{equation}
in $X_{N}$. Because the operators $F, G, M^{-1}$ is Lipschitz continuous, this equation can be solved by evoking the fixed-pointed theorem of Banach on a short time interval $[0,T^{\prime}]$, where $T^{\prime}\leq T$, in the space $C^{0}([0,T^{\prime}];X_{N})$. In fact, we have even $u_{N}\in C^{0}([0,T^{\prime}];X_{N})$. Thus, there exists a unique local-in-time solution $(n_{N},u_{N},B_{N})$ to (2.2),(2.7) and (2.4).

\subsection{Global existence of solutions}

In order to prove that the solution $(n_{N},u_{N},B_{N})$ constructed above exists on the whole time interval $[0,T]$, it is sufficient to show that $u_{N}$ is bounded in $X_{N}$ on $[0,T^{\prime}]$ by employing the energy estimate.

\begin{lemm}
Let $T^{\prime}\leq T$, and let $n_{N}\in C^{1}([0,T^{\prime}];C^{3}(T^{d})), u_{N}\in C^{1}([0,T^{\prime}]; X_{N})$ and $B_{N}\in L^{2}([0,T^{\prime}]:H^{1}(T^{d}))\cap L^{\infty}([0,T^{\prime}]; L^{2}(T^{d}))$ be a local-in-time solution to (2.2),(2.7), and (2.4) with $n=n_{N}, u=u_{N},B=B_{N}$. Then
\begin{equation}\label{2.12}
\begin{aligned}
& \frac{d}{dt} E(n_{N}, u_{N}, B_{N}) + 2 \int_{T^{d}} \mu(n_{N})|\nabla u_{N}|^{2}dx + \int_{T^{d}} \lambda(n_{N})|{\rm div} u_{N}|^{2} dx + \varepsilon \int_{T^{d}} \frac{1}{n}(P^{\prime}(n)+P^{\prime}_{c}(n))|\nabla n|^{2} dx \\
&+ \nu_{b} \int_{T^{d}} |\nabla \times B_{N}|^{2} dx +  \lambda\int_{T^{d}} |\Delta^{s} \nabla (n_{N}u_{N})|^{2} dx+ \lambda \varepsilon \int_{T^{d}} |\Delta^{s+1} n_{N}|^{2}dx\\
&+\varepsilon\int_{T^{d}}\frac{\hbar^{2}}{2}\varphi^{\prime} (n_{N}) \Delta \varphi(n_{N}) \Delta n_{N} dx=0
\end{aligned}
\end{equation}
where
\begin{equation}\label{2.13}
\begin{aligned}
& E(n_{N},u_{N},B_{N})= \frac{1}{2}\int_{T^{d}} n_{N}|u_{N}|^{2}dx + \int_{T^{d}} [H(n_{N})+ H_{c}(n_{N})] dx\\
&+ \frac{\hbar^{2}}{2} \int_{T^{d}} |\nabla \varphi(u_{N})|^{2}dx +
\frac{1}{2}\int_{T^{d}} |B_{N}|^{2} dx + \frac{1}{2}\int_{T^{d}}\frac{\lambda}{2} |\nabla^{2s+1} n_{N}|^{2}dx,
\end{aligned}
\end{equation}
\end{lemm}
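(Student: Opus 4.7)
The plan is to derive (2.12) by testing the strong (ODE) form of the momentum equation corresponding to (2.9) with $\psi=u_N$ (admissible because $u_N(t)\in X_N$), by pairing the induction equation in (2.7) with $B_N$ in $L^2$, and by combining both with the regularized continuity equation (2.3) multiplied by suitable scalar functions of $n_N$ so that all time derivatives assemble into $\frac{d}{dt}E(n_N,u_N,B_N)$. First, multiplying (2.3) by $\tfrac{1}{2}|u_N|^2$ and integrating, the convective contribution merges with the $\p_t u_N$ and $(n_N u_N\otimes u_N):\nabla u_N$ terms into $\frac{d}{dt}\int\tfrac{1}{2}n_N|u_N|^2\,dx$, while the $\ep\Delta n_N$ piece cancels exactly the explicit artificial-viscosity contribution $\ep\int(\nabla n_N\cdot\nabla)u_N\cdot u_N\,dx$ present in (2.9). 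Next, multiplying (2.3) by $H'(n_N)+H_c'(n_N)$ converts $(P(n_N)+P_c(n_N))\operatorname{div}u_N$ into $\frac{d}{dt}\int[H(n_N)+H_c(n_N)]\,dx$ and, using $H''(n)=P'(n)/n$ from (1.7), produces the dissipation $\ep\int\tfrac{1}{n_N}(P'+P_c')|\nabla n_N|^2\,dx$. The symmetric viscous contributions give $2\int\mu(n_N)|\nabla u_N|^2$ and $\int\lambda(n_N)|\operatorname{div}u_N|^2$ directly from the structure of $D(u_N)$.

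For the dispersive block I combine the identity
\[\tfrac{\hbar^2}{2}\int\bigl(\varphi'(n_N)\Delta\varphi(n_N)\nabla n_N\cdot u_N+n_N\varphi'(n_N)\Delta\varphi(n_N)\operatorname{div}u_N\bigr)dx=\tfrac{\hbar^2}{2}\int\varphi'(n_N)\Delta\varphi(n_N)\operatorname{div}(n_Nu_N)\,dx\]
with the substitution $\operatorname{div}(n_Nu_N)=\ep\Delta n_N-\p_t n_N$ from (2.3) and the chain rule $\varphi'(n_N)\p_t n_N=\p_t\varphi(n_N)$ followed by one spatial integration by parts; this recovers $\tfrac{\hbar^2}{2}\frac{d}{dt}\int|\nabla\varphi(n_N)|^2\,dx$ and leaves the $\ep\int\tfrac{\hbar^2}{2}\varphi'(n_N)\Delta\varphi(n_N)\Delta n_N\,dx$ remainder appearing in (2.12). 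For the magnetic field, pair (2.7a) with $B_N$ in $L^2$ and use the vector identity that $u_N\cdot((\nabla\times B_N)\times B_N)+B_N\cdot\nabla\times(u_N\times B_N)$ is a divergence (hence integrates to zero by periodicity), which forces the Lorentz coupling in (2.9) to cancel the induction coupling, leaving $\frac{d}{dt}\int\tfrac{1}{2}|B_N|^2\,dx+\nu_b\int|\nabla\times B_N|^2\,dx$ after integration by parts using $\operatorname{div}B_N=0$. The hyperviscous contributions arise from integrating by parts $-\lambda\Delta^{s+1}(n_N u_N):\Delta^s(n_N u_N)$ to produce $\lambda\int|\Delta^s\nabla(n_N u_N)|^2\,dx$, while multiplying (2.3) by $\lambda\Delta^{2s+2}n_N$ yields $\frac{d}{dt}\tfrac{\lambda}{2}\int|\nabla^{2s+1}n_N|^2\,dx+\lambda\ep\int|\Delta^{s+1}n_N|^2\,dx$.

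The principal obstacle will be the bookkeeping of the $\ep$-remainders: the parabolic regularization in (2.3) generates parasitic contributions in the kinetic, internal, dispersive, and hyperviscous balances, and one must verify that everything not explicitly listed in (2.12) cancels identically, leaving only the remainders shown. A secondary technicality is the hyperdissipative block, where $\lambda\Delta^{2s+1}$ requires $2s$ integrations by parts on $T^d$ and careful exploitation of the symmetry of the Laplacian to assemble cleanly into $|\Delta^s\nabla(n_N u_N)|^2$ and $|\Delta^{s+1}n_N|^2$. Once these cancellations are checked, collecting all terms gives (2.12); combined with the strict positivity of $n_N$ from (2.4) and a Gronwall argument, the identity produces a uniform bound on $u_N$ in $X_N$ on $[0,T']$ independent of $T'$, which by standard continuation extends the local solution to the whole interval $[0,T]$.
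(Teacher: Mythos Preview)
Your proposal is correct and follows essentially the same route as the paper's proof: test the approximate momentum equation (2.9) with $\psi=u_N$, pair the induction equation (2.7) with $B_N$, multiply the regularized continuity equation (2.3) by appropriate scalar functions of $n_N$, and add. The only cosmetic difference is that the paper bundles the multipliers into a single step---multiplying (2.3) by $H'(n_N)-\tfrac{1}{2}|u_N|^2-\tfrac{\hbar^2}{2}\varphi'(n_N)\Delta\varphi(n_N)$ at once---whereas you split this into separate multiplications by $\tfrac12|u_N|^2$, by $H'+H_c'$, and by the dispersive multiplier; likewise the paper obtains the $\tfrac{\lambda}{2}|\nabla^{2s+1}n_N|^2$ and $\lambda\varepsilon|\Delta^{s+1}n_N|^2$ contributions directly from the capillarity-type regularization term in (2.9) tested with $u_N$ (followed by substitution of (2.3)), rather than phrasing it as a separate multiplication of (2.3) by a high-order operator. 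These are equivalent bookkeeping choices, and the cancellations you flag (the $\varepsilon(\nabla n_N\cdot\nabla)u_N\cdot u_N$ term, the Lorentz/induction coupling) are exactly the ones the paper uses.
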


\begin{proof}
First we multipy (2.3) by $H^{\prime}(n_{N})- \frac{|u_{N}|^{2}}{2}- \frac{\hbar^{2}}{2}\varphi^{\prime} (n_{N}) \Delta \varphi(n_{N}) $, integrate over $T^{d}$, and integrate by parts:
\begin{equation}\label{2.14}
\begin{aligned}
&0= \int_{T^{d}} (\partial_{t}H(n_{N})- \frac{1}{2}|u_{N}|^{2}\partial_{t} n_{N} + \frac{\hbar^{2}}{2}\partial_{t}|\nabla \varphi(n_{N})|^{2}- n_{N}(H^{\prime\prime}(n_{N})+H^{\prime\prime}_{c}(n_{N}))\nabla n_{N}\cdot u_{N}\\
&+ n_{N}u_{N} \cdot \nabla u_{N} \cdot u_{N} - \frac{\hbar^{2}}{2}\varphi^{\prime} (n_{N}) \Delta \varphi(n_{N}) {\rm div} (n_{N}u_{N})+ \varepsilon H^{\prime\prime}(n_{N})|\nabla n_{N}|^{2} \\
& - \varepsilon \nabla n_{N} \cdot \nabla u_{N}\cdot u_{N}+ \varepsilon\frac{\hbar^{2}}{2}\varphi^{\prime} (n_{N}) \Delta \varphi(n_{N}) \Delta n_{N}) dx.
\end{aligned}
\end{equation}
Next, multipying the magnetic field equation (2.7) by $B_{N}$ we deduce that
\begin{equation}\label{2.15}
\int_{T^{d}} \nabla \times (u_{N}\times B_{N})\cdot B_{N} dx = \frac{1}{2} \int_{T^{d}}\frac{d}{dt} |B_{N}|^{2} dx+ \nu_{b} \int_{T^{d}} |\nabla \times B_{N}|^{2}dx,
\end{equation}
Then using the test function $u=u_{N}, n=n_{N}, B=B_{N}=G(u_{N})$ in (2.9) and integrating by parts leads to
\begin{equation}\label{2.16}
\begin{aligned}
&0= \int_{T^{d}} (|u_{N}|^{2}\partial_{t} n_{N} + \frac{1}{2} n_{N}\partial_{t}|u_{N}|^{2} - n_{N} u_{N} \otimes u_{N}: \nabla u_{N}+  (P^{\prime}(n_{N})+P^{\prime}_{c}(n_{N}) \nabla n_{N}\cdot u_{N}+ \frac{\lambda}{2}|\nabla^{2s+1} n|^{2} dx\\
&-2\int_{T^{d}}  {\rm div} (\mu(n_{N})D(u_{N}))u_{N} dx -\int_{T^{d}} \nabla (\lambda(n_{N}){\rm div} u_{N})\cdot u_{N} dx -\frac{\hbar^{2}}{2} \int_{T^{d}} n_{N} \nabla (\varphi^{\prime}\Delta\psi(n_{N})) u_{N} dx\\
&-
\nu_{b} \int_{T^{d}} (\nabla \times u_{N}) \times B_{N} \cdot B_{N} dx+\lambda\int_{T^{d}} |\Delta^{s} \nabla (n_{N}u_{N})|^{2} dx+ \lambda \varepsilon \int_{T^{d}} |\Delta^{s+1} n_{N}|^{2}dx
\end{aligned}
\end{equation}
Adding above three equations gives, since $n_{N}H^{\prime \prime}=p^{\prime}(n_{N})$.
Thus the proof of Lemma 2.1 is finished.
\end{proof}

From Lemma 2.1 we have the following estimates:

- the density estimates
\begin{equation}\label{2.17}
\begin{aligned}
&\|n_{N}\|_{L^{\infty}(0,T;L^{\gamma^{+}}(\Omega))} +  \|n_{N}\|_{L^{\infty}(0,T;L^{\gamma^{-}}(\Omega))}
+\|\nabla \varphi(n_{N})\|_{L^{\infty}(0,T;L^{2}(\Omega))} \\
&
+\sqrt{\varepsilon}\|\frac{1}{\sqrt{n_{N}}}\sqrt{\frac{\partial P_{c}}{\partial n_{N}} }\nabla n_{N}\|_{L^{2}(0,T;L^{2}(\Omega))} \leq C ,
\end{aligned}
\end{equation}

- the velocity estimates
\begin{equation}\label{2.18}
\begin{aligned}
&\|\sqrt{n_{N}}u_{N}\|_{L^{\infty}(0,T;L^{2}(\Omega))}+\|\sqrt{n_{N}}
D( u_{N})\|_{L^{2}(0,T;L^{2}(\Omega))}\\
&+
\|\sqrt{\lambda}\Delta^{s}\nabla(n_{N} u_{N})\|_{L^{2}(0,T;L^{2}(\Omega))}\leq C ,
\end{aligned}
\end{equation}

By a interpolation inequality we can get the density $\rho$ is bounded from below by a positive constant
\begin{equation}\label{2.19}
\begin{aligned}
&\|\rho^{-1}\|_{L^{\infty}((0,T)\times\Omega)}\leq \|\rho^{-1}\|_{L^{\infty}((0,T);H^{2}})
\\
&\leq C(1+\|\nabla^{3}\rho\|_{L^{\infty}((0,T);L^{2}(\Omega))})^{3}(1+\|\rho^{-1}\|_{L^{\infty}((0,T);L^{2}(\Omega))})^{4}
\leq C(\lambda) ,
\end{aligned}
\end{equation}
above we require $\gamma^{-}>4$ and $2s+1\geq 3$.

Combing with (2.15) we deduce the uniform bound for $\mathbf{u}$ , thus we get a global approximating solution.

The summarizing estimates (2.17)-(2.18) are  uniform with the dimension N, thus we can extract the weakly convergent subsequences and pass the limit passage $N\rightarrow\infty$ in the Galerkin approximation.

\section{\bf {Passage to the limit with N.}}
This subsection is devoted to the limit passage $N\rightarrow \infty $. Using estimates from the previous subsection we can extract weakly subsequences, whose limits satisfy the approximate system.
\subsection{\bf {Strong convergence of the density and passage to the limit in the continuity equation}}
From (2.17)-(2.18) we deduce that
\begin{equation}\label{3.1}
 u_{N}\rightarrow  u ~~~weakly ~~in ~ L^{2}(0,T;W^{2s+1,2}(\Omega))
\end{equation}
and
\begin{equation}\label{3.2}
n_{N}\rightarrow n ~~~weakly ~~in ~ L^{2}(0,T;W^{2s+2,2}(\Omega))
\end{equation}
at least for a suitable subsequence. In addition the r.h.s. of the linear parabolic problem
\begin{equation}\label{3.3}
\begin{aligned}
 \partial_{t} n+ div(n u)-\varepsilon\Delta n=0, \\
\rho(0,x)=\rho_{\lambda}^{0}(x),
\end{aligned}
\end{equation}
is uniformly bounded in $L^{2}(0,T;W^{2s,2}(\Omega))$ and the initial condition is sufficiently smooth, thus, applying he $L^{p}-L^{q}$ theory to this problem we conclude that $\{\partial_{t}\rho_{N}\}_{n=1}^{\infty}$ is uniformly bounded in  $L^{2}(0,T;W^{2s,2}(\Omega))$. Hence, the standard compact embedding implies $\rho_{N}\rightarrow \rho$ a.e. in $(0,T)\times\Omega$ and therefore passage to the limit in the approximate continuity equation is straightforward.

\subsection{\bf {Passage to the limit in the momentum equation}}
Having the strong convergence of the density, we start to identify the limit for $N\rightarrow\infty$ in the nonlinear terms of the momentum equation.

{\bf{The convective term}}.  First, one observes that
 \begin{equation*}
\rho_{N}\mathbf u_{N}\rightarrow \rho\mathbf u ~~~weakly^{*} ~~~in ~~~L^{\infty}(0,T;L^{2}(\Omega))
\end{equation*}
due to the uniform estimates (2.18) and the strong convergence of the density. Next, one can show that for any $\phi\in\cap_{n=1}^{\infty}X_{N}$ the family of functions $\int_{\Omega}\rho_{N}\mathbf u_{N}\phi dx$ is bounded and equi-continuous in $C(0,T)$, thus via the Arzela-Ascoli theorem and density of smooth functions in $L^{2}(\Omega)$ we get that
 \begin{equation}\label{3.4}
\rho_{N}\mathbf u_{N}\rightarrow \rho\mathbf u  ~~~in ~~~C([0,T];L^{2}_{weak}(\Omega))
\end{equation}
Finally, by the compact embedding $L^{2}(\Omega)\subset W^{-1,2}(\Omega)$ and the weak convergence of $\mathbf u_{N}$ we verify that
\begin{equation}\label{3.5}
\rho_{N}\mathbf u_{N}\otimes\mathbf u_{N}\rightarrow \rho\mathbf u \otimes\mathbf u ~~~~~~weakly~~~in ~~~L^{2}((0,T)\times\Omega)
\end{equation}
{\bf{The capillarity term}}. We write it in the form
\begin{equation*}
\int_{0}^{T}\int_{\Omega}\rho_{N}\nabla\Delta^{2s+1}\rho_{N}\cdot\phi dx dt=
\int_{0}^{T}\int_{\Omega}\Delta^{s}div(\rho_{N}\phi)\Delta^{s+1}\rho_{N} dx dt
\end{equation*}
Due to (2.18) and the boundedness of the time derivative of $\rho_{N}$, we infer that
\begin{equation}\label{3.6}
\rho_{N}\rightarrow\rho ~~~~strongly~~ in~~ L^{2}(0,T;W^{2s+1,2}(\Omega))
\end{equation}
thus
\begin{equation*}
\int_{0}^{T}\int_{\Omega}\Delta^{s}div(\rho_{N}\phi)\Delta^{s+1}\rho_{N}dx dt
\rightarrow \int_{0}^{T}\int_{\Omega}\Delta^{s}div(\rho\phi)\Delta^{s+1}\rho dx dt
\end{equation*}
for any $\phi\in C^{\infty}((0,T)\times\overline{\Omega})$.

{\bf{The momentum term}}. we write it in the form
\begin{equation*}
-\lambda\int_{0}^{T}\int_{\Omega}\rho_{N}\Delta^{2s+1}(\rho_{N}\mathbf u_{N})\cdot \phi dx dt=
-\lambda\int_{0}^{T}\int_{\Omega}\Delta^{s}\nabla(\rho_{N}\mathbf u_{N}):\Delta^{s}\nabla(\rho_{N}\mathbf \phi) dx dt
\end{equation*}
so the convergence established in (3.1) and (3.6) are sufficient to pass to the limit here.

Strong convergence of the density  enables us to perform in the momentum equation (2.9) satisfied for any function $\phi\in C^{1}([0,T];(X_{N}))$ such that $\phi(T)=0$ and by the density argument we can take all such test functions from $C^{1}([0,T];W^{2s+1}(\Omega))$.

\section{Derivation of the B-D estimate}
At this level we are left with only two parameters of approximation: $\varepsilon$ and $\lambda$.
From the so-far obtained a-priori estimates only the ones following from (2.17) and (2.18)
were independent of these parameters. Now we will have get more enough estimates for density and velocity from the B-D entropy energy inequality, we will prove the following lemma.
\begin{lemm} (Bresch-Desjardins type estimate)
The following identity holds:
\begin{equation}\label{4.1}
\begin{aligned}
& \frac{d}{dt} \int_{T^{d}} (\frac{1}{2} n|u+ \nabla \phi(n)|^{2}+ H(n)+H_{c}(n)+ \frac{\hbar^{2}}{2}|\nabla \varphi(n)|^{2}+ \frac{1}{2}|B|^{2}) dx +  \int_{T^{d}} 2\mu(n)|A(u)|^{2} dx \\
&+ \frac{\hbar^{2}}{2}\int_{T^{d}} \varphi^{\prime}(n)|\Delta \varphi(n)|^{2} dx+ \nu_{b} \int_{T^{d}} |\nabla \times B|^{2} dx + 2 \int_{T^{d}} \mu^{\prime}(n) (P^{\prime}(n)+P^{\prime}_{c}(n))\frac{|\nabla n|^{2}}{n} dx\\
&+2\lambda\int_{\Omega}  \Delta^{s+1} n \Delta^{s} \mu(n) dx =
-2\lambda\int_{\Omega} \Delta^{s}\nabla(n u): \Delta^{s} \nabla^{2}\mu(n) dx - \varepsilon\int_{\Omega}{\rm div}(nu)\phi^{'}(n)\Delta n dx\\
&+
\varepsilon\int_{\Omega}\frac{|\nabla \phi(n)|^2}{2}\Delta n dx
-\varepsilon\int_{\Omega}(\nabla n \cdot \nabla) u \cdot \nabla \phi(n) dx+\varepsilon\int_{\Omega} n\nabla \phi(n)\cdot \nabla (\phi^{'}(n)\Delta n) dx\\
&+ \int_{T^{d}} (\nabla\times B)\times B \cdot \nabla \phi(n) dx,
\end{aligned}
\end{equation}
in $\mathcal{D}^{\prime}(0,T)$, where $\nabla \phi(n)=2\frac{\nabla\mu(n)}{n} $.
\end{lemm}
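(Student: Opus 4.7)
\medskip

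\noindent\textbf{Proof proposal.} The plan is to follow the classical Bresch-Desjardins strategy: test the momentum equation with $u+\nabla\phi(n)$ rather than $u$, which upgrades the energy inequality by the quantity $\tfrac12\int n|\nabla\phi(n)|^2 dx$ and replaces the symmetric gradient dissipation by its antisymmetric counterpart plus a coercive density-gradient term. Concretely, I would start from the already-established energy identity of Lemma 2.1, which controls $\tfrac12\int n|u|^2+H(n)+H_c(n)+\tfrac{\hbar^2}{2}|\nabla\varphi(n)|^2+\tfrac12|B|^2$, and then add to it the time derivatives of the two ``new'' BD pieces $\int nu\cdot\nabla\phi(n) dx$ and $\tfrac12\int n|\nabla\phi(n)|^2 dx$. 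Using $\phi'(n)=2\mu'(n)/n$ and the continuity equation $\partial_t n+\operatorname{div}(nu)=\varepsilon\Delta n$, one rewrites $\partial_t\nabla\phi(n)=-\nabla(\phi'(n)\operatorname{div}(nu))+\varepsilon\nabla(\phi'(n)\Delta n)$, which produces the $\varepsilon$-correction terms on the right-hand side of \eqref{4.1}.

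\medskip

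\noindent Next I would compute $\frac{d}{dt}\int nu\cdot\nabla\phi(n)dx$ by using the approximate momentum equation. The crucial cancellations are the BD identities
\begin{equation*}
-2\int\operatorname{div}(\mu(n)D(u))\cdot\nabla\phi(n)dx-\int\nabla(\lambda(n)\operatorname{div} u)\cdot\nabla\phi(n)dx
\end{equation*}
combined with $2\int\mu(n)|D(u)|^2dx+\int\lambda(n)|\operatorname{div} u|^2dx$ from the energy identity; invoking $\lambda(n)=2(n\mu'(n)-\mu(n))$ and integrating by parts transforms this sum into $2\int\mu(n)|A(u)|^2dx$, where $A(u)$ is the antisymmetric part of $\nabla u$. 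Similarly, the pressure terms $\int\nabla(P+P_c)\cdot\nabla\phi(n)dx$ become, after one integration by parts, the coercive term $2\int\mu'(n)(P'+P'_c)\frac{|\nabla n|^2}{n}dx$. The quantum dispersion $-\tfrac{\hbar^2}{2}\int n\nabla(\varphi'(n)\Delta\varphi(n))\cdot\nabla\phi(n)dx$ is reorganized, using $\varphi(n)=n^\alpha$ and $\mu(n)=\mu_0 n^\alpha$ so that $\nabla\phi(n)$ is proportional to $\nabla\varphi(n)/n$, into the coercive contribution $\tfrac{\hbar^2}{2}\int\varphi'(n)|\Delta\varphi(n)|^2dx$ modulo exact divergences. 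The $\lambda$-regularization contributes the mixed terms involving $\Delta^{s+1}n$ and $\Delta^s\mu(n)$ that appear on both sides of \eqref{4.1} after integration by parts.

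\medskip

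\noindent For the magnetic contribution, I would test the magnetic equation \eqref{2.7} with $B$ to reproduce the $\nu_b\int|\nabla\times B|^2dx$ dissipation as in \eqref{2.15}, and then note that the Lorentz force in the momentum equation, when paired with $\nabla\phi(n)$, produces exactly the residual coupling term $\int(\nabla\times B)\times B\cdot\nabla\phi(n)dx$ on the right-hand side of \eqref{4.1}; no cancellation is available here since $\nabla\phi(n)$ is not solenoidal, so this quantity must be retained and later absorbed by other estimates.

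\medskip

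\noindent The main obstacle is the bookkeeping of the viscous, capillary, and quantum integrations by parts: one has to show that the numerous boundary-free terms generated by $\lambda(n)=2(n\mu'(n)-\mu(n))$ and by the identity $\phi'(n)=2\mu'(n)/n$ recombine exactly into the antisymmetric dissipation $2\mu(n)|A(u)|^2$ and the BD coercive term $2\mu'(n)(P'+P'_c)|\nabla n|^2/n$. A related subtlety is handling the $\varepsilon$-terms coming from the artificial viscosity in the continuity equation; they do not vanish at this level and must be tracked as the right-hand side contributions $-\varepsilon\int\operatorname{div}(nu)\phi'(n)\Delta n+\varepsilon\int\tfrac{|\nabla\phi(n)|^2}{2}\Delta n-\varepsilon\int(\nabla n\cdot\nabla)u\cdot\nabla\phi(n)+\varepsilon\int n\nabla\phi(n)\cdot\nabla(\phi'(n)\Delta n)$ shown in \eqref{4.1}, whose eventual smallness will have to be verified when passing to the limit $\varepsilon\to 0$ in a later section.
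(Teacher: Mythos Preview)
Your proposal is correct and follows essentially the same route as the paper: decompose $\tfrac12 n|u+\nabla\phi(n)|^2$ into three pieces, use the energy identity (Lemma~2.1) for the first, multiply the continuity equation by $\tfrac12|\nabla\phi(n)|^2$ for the third, and test the momentum equation with $\nabla\phi(n)$ for the cross term, then exploit $\lambda(n)=2(n\mu'(n)-\mu(n))$ to recombine the viscous contributions into $2\mu(n)|A(u)|^2$ and obtain the residual $\varepsilon$-, $\lambda$-, and Lorentz terms on the right. The only point the paper adds that you should make explicit is a mollification-in-time argument to justify the product rule $\frac{d}{dt}\int nu\cdot\nabla\phi(n)\,dx=\langle\partial_t(nu),\nabla\phi(n)\rangle_{V^*,V}+\int nu\cdot\partial_t\nabla\phi(n)\,dx$, since after the Galerkin limit $\partial_t(nu)$ lives only in $L^2(0,T;V^*)$ and the identity must hold in $\mathcal D'(0,T)$.
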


\begin{proof} The basic idea of the proof is to find the explicit form of the terms:
\begin{equation}\label{4.2}
\frac{d}{dt}\int_{\Omega}(\frac{1}{2}n |u|^2+nu\cdot\nabla \phi(n)+ \frac{1}{2}n|\nabla \phi(n)|^2)dx.
\end{equation}
The first term can be evaluated by means of the main energy inequality, i.e.
\begin{equation}\label{4.3}\begin{aligned}
& \frac{d}{dt}\int_{T^{d}}(\frac{1}{2} n|u|^{2}+ [H(n)+ H_{c}(n)] dx+ \frac{\hbar^{2}}{2} |\nabla \varphi(u)|^{2} +
\frac{1}{2}|B|^{2} dx + \frac{\lambda}{2} |\nabla^{2s+1} n|^{2})dx\\
& +2 \int_{T^{d}} \mu(n)|\nabla u|^{2}dx + \int_{T^{d}} \lambda(n)|{\rm div} u|^{2} dx + \varepsilon \int_{T^{d}} \frac{1}{n}(P^{\prime}(n)+P^{\prime}_{c}(n))|\nabla n|^{2} dx \\
&+ \nu_{b} \int_{T^{d}} |\nabla \times B|^{2} dx +  \lambda\int_{T^{d}} |\Delta^{s} \nabla (nu)|^{2} dx+ \lambda \varepsilon \int_{T^{d}} |\Delta^{s+1} n|^{2}dx\\
&+\varepsilon\int_{T^{d}}\frac{\hbar^{2}}{2}\varphi^{\prime} (n) \Delta \varphi(n) \Delta ndx=0
,\end{aligned}
\end{equation}
To get a relevant expression for third term in (4.2), we multiply the approximate continuity equation by $\frac{|\nabla \phi(n)|^2}{2}$ and we obtain the following sequence of equalities
\begin{equation}\label{4.4}\begin{aligned}
& \frac{d}{dt}\int_{\Omega}\frac{1}{2}n|\nabla \phi(n)|^2dx \\
& =\int_{\Omega}(n \partial_{t}\frac{|\nabla \phi(n)|^2}{2}-\frac{|\nabla \phi(n)|^2}{2}div(n u)+\varepsilon\frac{|\nabla \phi(n)|^2}{2}\Delta n) dx \\
& = \int_{\Omega}(\rho\nabla \phi(n)\cdot \nabla (\phi^{'}(n)\partial_{t}n)-\frac{|\nabla \phi(n)|^2}{2}div(n  u)+\varepsilon\frac{|\nabla \phi(n)|^2}{2}\Delta n)dx
,\end{aligned}
\end{equation}
Using the approximate continuity equation, we get
\begin{equation}\label{4.5}\begin{aligned}
&\int_{\Omega}n\nabla \phi(n)\cdot \nabla (\phi^{'}(n)\partial_{t}n) dx \\
& =\int_{\Omega}\varepsilon n\nabla \phi(n)\cdot \nabla (\phi^{'}(n)\Delta n) dx-
\int_{\Omega} \rho\nabla \mathbf u : \nabla\phi(\rho)\otimes \nabla\phi(n) dx \\
&-\int_{\Omega}n\nabla \phi(n)\cdot \nabla (\phi^{'}(n)n div \mathbf u) dx
-\int_{\Omega} n\mathbf u \otimes \nabla\phi(n) :\nabla^{2}\phi(n) dx
,\end{aligned}
\end{equation}
Integrating by parts the two last terms from the r.h.s.
\begin{equation}\label{4.6}\begin{aligned}
&\int_{\Omega}n\nabla \phi(n)\cdot \nabla (\phi^{'}(n)\partial_{t}n) dx \\
& =\int_{\Omega}\varepsilon n\nabla \phi(n)\cdot \nabla (\phi^{'}(n)\Delta n) dx-
\int_{\Omega} n\nabla  u : \nabla\phi(n)\otimes \nabla\phi(n) dx \\
&+\int_{\Omega}n|\nabla \phi(n)|^{2} {\rm div} u dx +
\int_{\Omega} n^{2} \phi^{'}(n)\Delta \phi(n){\rm div} u dx \\
&+\int_{\Omega} |\nabla \phi(n)|^{2} {\rm div}(n  u) dx
+\int_{\Omega} n u \cdot\nabla( \nabla\phi(n)) \cdot\nabla(\phi(n)) dx
,\end{aligned}
\end{equation}
Combining the three previous equalities we finally obtain
\begin{equation}\label{4.7}\begin{aligned}
& \frac{d}{dt}\int_{\Omega}\frac{1}{2}n|\nabla \phi(n)|^2dx \\
& = \int_{\Omega}\varepsilon n\nabla \phi(n)\cdot \nabla (\phi^{'}(n)\Delta n) dx-
\int_{\Omega} n\nabla u : \nabla\phi(n)\otimes \nabla\phi(n) dx \\
&+\int_{\Omega}n|\nabla \phi(n)|^{2} {\rm div}u dx +
\int_{\Omega} n^{2} \phi^{'}(n)\Delta \phi(n){\rm div} u dx+\int_{\Omega}\varepsilon\frac{|\nabla \phi(n)|^2}{2}\Delta n dx
,\end{aligned}
\end{equation}
In the above series of equalities, each one holds ponitwisely with respect to time
due to the regularity of $n$ and $\nabla \phi$. This is not the case of the middle
 integrant of (4.2), for which one should really think of weak in time formulation. Denote
\begin{equation}\label{4.8}
V=W^{2s+1,2}(\Omega), ~~~~~~and ~~~~~\mathbf v=n \mathbf u,~~~~h=\nabla \phi.
\end{equation}
We know that $\mathbf v\in L^{2}(o,T;V)$ and its weak derivative with respect to time variable $\mathbf v^{'}\in L^{2}(o,T;V^{*})$ where
$V^{*}$ denotes the dual space to V. Moreover, $h\in L^{2}(0,T;V)$, $h^{'}\in L^{2}(0,T;W^{2s-1,2}(\Omega))$. Now, let $\mathbf v_{m}$, $h_{m}$ denote the standard mollifications in time of
$\mathbf v$, $h$ respectively. By the properties of mollifiers we know that
\begin{equation}\label{4.9}
\mathbf v_{m},\mathbf v_{m}^{'}\in C^{\infty}(0,T;V),~~~~~~~~~~~h_{m},h_{m}^{'}\in C^{\infty}(0,T;V)
\end{equation}
and
\begin{equation}\label{4.10}\begin{aligned}
\mathbf v_{m}\rightarrow v ~~~~~L^{2}(0,T;V),~~~~~h_{m}\rightarrow h~~~~~~L^{2}(0,T;V), \\
\mathbf v_{m}^{'}\rightarrow v^{'}~~~~~L^{2}(0,T;V^{*}),~~~~~h_{m}^{'}\rightarrow h^{'}~~~~~~L^{2}(0,T;V^{*})
,\end{aligned}
\end{equation}
For these regularized sequences we may write
\begin{equation}\label{4.11}\begin{aligned}
\frac{d}{dt}\int_{\Omega}\mathbf v_{m}\cdot h_{m} dx=\frac{d}{dt}(\mathbf v_{m},h_{m})_{V}=(\mathbf v_{m}^{'},h_{m})_{V}+(\mathbf v_{m},h_{m}^{'})_{V}, ~~~\forall \psi \in\mathcal{D}(0,T)
,\end{aligned}
\end{equation}
Using the Riesz representation theorem we verify that
$\mathbf v_{m}^{'}\in C^{\infty}(0,T;V)$ uniquely determines the functional
$\Phi_{v_{m}^{'}}\in V^{*}$ such that
$(\mathbf v_{m}^{'},\psi)_{V}=(\Phi_{v_{m}^{'}},\psi)_{V^{*},V}=\int_{\Omega}\mathbf v_{m}^{'}\cdot\psi dx$, $\forall \psi \in V$;
for the second term from the r.h.s. of (4.11) we can simply replace $V=L^{2}(\Omega)$ and thus we obtain
\begin{equation}\label{4.12}\begin{aligned}
-\int_{0}^{T}(\mathbf v_{m},h_{m})_{V}\psi^{'} dt= \int_{0}^{T}(\mathbf v_{m}^{'},h_{m})_{V^{*},V}\psi dt +\int_{0}^{T}(\mathbf v_{m},h_{m}^{'})_{L^{2}(\Omega)}\psi dt~~~~
\forall \psi \in \mathcal{D}(0,T)
,\end{aligned}
\end{equation}
Observe that both integrands from the r.h.s. are uniformly bounded in $L^{1}(0,T)$,
 thus, using (4.10), we let $m\rightarrow\infty$ to obtain
\begin{equation}\label{4.13}\begin{aligned}
\frac{d}{dt}\int_{\Omega}\mathbf v\cdot h dx=(\mathbf v^{'},h)_{V}+(\mathbf v,h^{'})_{V}, ~~~\forall \psi \in\mathcal{D}(0,T)
,\end{aligned}
\end{equation}
Coming back to our original notation, this means that the operation
\begin{equation}\label{4.14}
\frac{d}{dt}\int_{\Omega}nu\cdot\nabla \phi(n)dx =<\partial_{t}(n u),\nabla \phi>_{V^{*},V}+\int_{\Omega}n u\cdot \partial_{t}\nabla \phi dx
\end{equation}
is well defined and is nothing but equality between two scalar distributions. By the fact that $\partial_{t}\nabla \phi$ exists a.e. in $(0,T)\times \Omega$ we may use approximation to write
\begin{equation}\label{4.15}
\int_{\Omega}nu\cdot\partial_{t}\nabla \phi(n)dx =\int_{\Omega}({\rm div}(n u))^{2}\phi^{'}(n) dx- \varepsilon\int_{\Omega}{\rm div}(n u)\phi^{'}(n)\Delta n dx
\end{equation}
whence the first term on the r.h.s. of (4.14) may be evaluated by testing the approximate momentum equation by $\nabla n$
\begin{equation}\label{4.16}\begin{aligned}
&<\partial_{t}(n \mathbf u),\nabla \phi(n)>_{V^{*},V}= -\int_{\Omega}(2\mu(n)+\lambda(n))\Delta \phi(n) {\rm div} u dx +2\int_{\Omega}\nabla u: \nabla \phi(n) \otimes \nabla \mu(n) dx \\
&-2 \int_{\Omega}\nabla \phi(n)\cdot\nabla \mu(n) {\rm div} u dx -\int_{\Omega}\nabla \phi(n)\cdot \nabla P dx -\lambda\int_{\Omega} \Delta^{s+1}\mu(n) \Delta^{s} {\rm div} (n \nabla \phi(n))dx
\\
&-\lambda\int_{\Omega} \Delta^{s}\nabla(n  u): \Delta^{s} \nabla(n \nabla \phi(n))dx -\int_{\Omega}\nabla \phi(n)\cdot {\rm div}(n  u\otimes u) dx \\
&-\varepsilon\int_{\Omega}(\nabla n \cdot \nabla) u \cdot \nabla \phi(n) dx
-\int_{\Omega}\varphi^{\prime}(n) |\Delta \varphi(n)|^{2} dx+ \int_{T^{d}} (\nabla\times B)\times B \cdot \phi(n) dx
,\end{aligned}
\end{equation}
Recalling the form of $\phi(n)$ it can be deduced that the
\begin{equation}\label{4.17}\begin{aligned}
&\frac{d}{dt}\int_{\Omega}(n u\cdot\nabla \phi(n)+ \frac{1}{2}n|\nabla \phi(n)|^2)dx+ \int_{\Omega}\nabla \phi(n)\cdot \nabla P dx+\lambda\int_{\Omega} \mu^{\prime}(n) \Delta \mu(n) \Delta^{s} \mu(n) dx \\
& = -\int_{\Omega}\nabla \phi(n)\cdot {\rm div}(n u\otimes u) dx + \int_{\Omega}({\rm div}(nu))^{2}\phi^{'}(n) dx
-2\lambda\int_{\Omega} \Delta^{s}\nabla(n u): \Delta^{s} \nabla^{2}\mu(n) dx \\
&- \varepsilon\int_{\Omega}{\rm div}(nu)\phi^{'}(n)\Delta n dx+
\varepsilon\int_{\Omega}\frac{|\nabla \phi(n)|^2}{2}\Delta n dx
-\varepsilon\int_{\Omega}(\nabla n \cdot \nabla) u \cdot \nabla \phi(n) dx\\
&+\varepsilon\int_{\Omega} n\nabla \phi(n)\cdot \nabla (\phi^{'}(n)\Delta n) dx-\int_{\Omega}\varphi^{\prime}(n) |\Delta \varphi(n)|^{2} dx+ \int_{T^{d}} (\nabla\times B)\times B \cdot \nabla \phi(n) dx
,\end{aligned}
\end{equation}
The first two terms from the r.h.s. of (4.17) can be transformed
\begin{equation}\label{4.8}\begin{aligned}
&\int_{\Omega}(({\rm div}(n u))^{2}\phi^{'}(n) -\nabla \phi(n)\cdot {\rm div}(n u\otimes u)) dx \\
& = \int_{\Omega}(n^{2}\phi^{'}(n)({\rm div} u)^{2}+n\phi^{'}\cdot \nabla n {\rm div} u-n\phi^{'} \nabla n( u\cdot \nabla u))dx \\
& = 2\int_{\Omega}\mu(n) \partial_{i} u_{j}\partial_{j} u_{i}dx = 2\int_{\Omega}\mu(n) |D(u)|^{2} dx -  2\int_{\Omega}\mu(n) (\frac{\partial_{i} u_{j}-\partial_{j} u_{i}}{2})^{2}dx
,\end{aligned}
\end{equation}
thus, the assertion of Lemma 4.1 follows by adding (4.3) to (4.17).
\end{proof}

The main problem is to control the last term on the right hand side of (4.1), other terms can be easier to control. For this obstacle, we estimate as follow
\begin{equation}\label{4.19}
\begin{aligned}
& 2|\int_{T^{d}} (\nabla \times B)\times B \cdot \frac{\nabla \mu(n)}{n}|
\leq \int_{T^{d}}\frac{|\nabla \times B|^{2}}{\varepsilon n^{2}}dx+ \varepsilon \int_{T^{d}}|\nabla \mu(n)\times B|^{2} dx ,
\end{aligned}
\end{equation}
The first term of the right hand side will sent to the left hand side of equation and will we compensated with the term related to the resistivity thanks to the profiles condition introduced in (1.6).

The dimension hypothesis appears at this point, in a 2-dimensional space, we can insure that $W^{1,1}\subset L^{2}$ and this will be the main tool deal with the second term. we have
\begin{equation}\label{4.20}
\begin{aligned}
\|\nabla \mu(n)\times B  \|_{L^{2}(T^{d})}^{2} &\leq C\|\nabla \mu(n)\times B\|^{2}_{W^{1,1}}\\
& \leq C  (\|\Delta \mu(n)\|_{L^{2}(T^{d})}^{2}\|B\|_{L^{T^{d}}}^{2}+ \| \nabla \mu(n)\|_{L^{2}(T^{d})}^{2}\|\nabla B\|_{L^{2}(T^{d})}^{2}\\
&+ \|\nabla \mu(n)\times B\|_{L^{1}(T^{d})}^{2} ,
\end{aligned}
\end{equation}
But, from (2.12), we already know that $\|B\|_{L^{2}}$ and $\|\nabla \mu(n)\|_{L^{2}}$ are uniformly bounded by $\Lambda_{0}$, that is why we also get
\begin{equation}\label{4.21}
\|\nabla \mu(n)\times B  \|_{L^{2}(T^{d})}^{2} \leq C(1+\|\Delta \mu(n)\|_{L^{2}}^{2}+\|\nabla \times B\|^{2}_{L^{2}}),
\end{equation}
So we get, summing (4.20) and (4.21)  and taking into account all these quantities, for $\varepsilon$ small enough,
we are  considering here some coefficients $\varepsilon<\frac{1}{6}$ and such that $\mu^{\prime}-C\varepsilon$ still higher that a constant, say $\delta$. It also appears the necessary conditions on the constants $d_{0}$ and $d_{1}$, to be high enough because we need to have $\eta(n)-\varepsilon^{-1}n^{-2}-C\varepsilon \geq 0$.   To conclude, we apply a Gronwall's lemma, we will get B-D entropy energy estimates.

\section{Estimates independent of $\varepsilon,\lambda$, passage to the limit $\varepsilon,\lambda\rightarrow 0$}
In this section we first present the new uniform bounds arising fron the estimate of B-D entropy, performed in this section, and then we let the last two approximation parameters to 0. Note that the limit passage $\lambda\rightarrow 0$, $\varepsilon\rightarrow0$ could be done in a single step.

We complete the set uniform bounds by the following ones
\begin{equation}\label{5.1}
 \sqrt{\lambda} \|\Delta^{s+1} n_{\varepsilon,\lambda} \|_{L^{2}((0,T)\times\Omega)}+\| \nabla \phi(n_{\varepsilon,\lambda}) \|_{L^{2}((0,T)\times\Omega)}+
 \|\sqrt{\frac{\mu^{\prime}(n_{\varepsilon,\lambda})(P^{\prime}(n_{\varepsilon,\lambda})
 +P^{\prime}_{c}(n_{\varepsilon,\lambda}))}{n_{\varepsilon,\lambda}}}\nabla n\|_{L^{2}((0,T)\times\Omega)} \leq C
\end{equation}
moreover
\begin{equation}\label{5.2}
\|\Delta\mu(n_{\varepsilon,\lambda})\|_{L^{2}((0,T)\times\Omega)}\leq C
\end{equation}
The uniform estimates for the velocity vector field are the following ones
\begin{equation}\label{5.3}
 \sqrt{\lambda} \|\Delta^{s} \nabla(\rho u_{\varepsilon,\lambda}) \|_{L^{2}((0,T)\times\Omega)}+\|\sqrt{\mu(n_{\varepsilon,\lambda})}\nabla A(u_{\varepsilon,\lambda})\|_{L^{2}((0,T)\times\Omega)}\leq C
\end{equation}
and the constants from the r.h.s are independent of $\varepsilon$ and $\lambda$.

We now present several additional estimates of $n_{\varepsilon,\lambda}$ and $u_{\varepsilon,\lambda}$ based on imbedding of Sobolev spaces and simple interpolation inequalities.

\subsection{Further estimates of $n$}
\begin{lemm}
\begin{equation}\label{5.4}
n_{\varepsilon,\lambda}^{-1/2}~ is~ uniformly~ bounded~ in~ L^{\infty}(0,T;L^{6}_{loc}(\Omega))\cap L^{2}(0,T;H^{1}_{loc}(\Omega)),
\end{equation}
\begin{equation}\label{5.5}
n_{\varepsilon,\lambda}~ is~ uniformly~ bounded~ in~ L^{\infty}(0,T;L^{p}_{loc}(\Omega)),\forall p<+\infty.
\end{equation}
\end{lemm}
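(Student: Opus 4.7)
\medskip

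\noindent\textbf{Proof proposal.} The plan is to derive both bounds directly from the uniform estimates collected in (2.17)--(2.18), (4.1) and (5.1)--(5.3), using nothing more than the explicit forms $\mu(n)=\mu_0 n^{\a}$, $\varphi(n)=n^{\a}$, and the behaviour of $H_c$ near $n=0$. Since $\Omega=T^2$, the qualifier ``loc'' is cosmetic, so I will argue globally.

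\medskip

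\noindent\textbf{Step 1: $L^\infty_t L^6_x$ bound for $n^{-1/2}$.} From \eqref{1.7} and the assumption \eqref{1.5}, one checks that $H_c(n)\sim c\, n^{-\gamma^-}$ as $n\to 0$, so the energy bound $H_c(n)\in L^\infty(0,T;L^1(\Omega))$ gives
$$\sup_{t\in(0,T)}\int_{\{n\leq 1\}} n^{-\gamma^-}dx \leq C.$$
Since $\gamma^->4>3$, splitting the integral as
$\int_{\Omega} n^{-3}dx = \int_{\{n\leq 1\}} n^{-3}dx + \int_{\{n>1\}} n^{-3}dx \leq \int_{\{n\leq 1\}} n^{-\gamma^-}dx + |\Omega|$
yields the desired bound on $\|n^{-1/2}\|_{L^\infty_t L^6_x}$, uniformly in $\varepsilon,\lambda$.

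\medskip

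\noindent\textbf{Step 2: $L^2_t H^1_x$ bound for $n^{-1/2}$.} Write $\nabla n^{-1/2}=-\tfrac12 n^{-3/2}\nabla n$, so it suffices to bound $n^{-3}|\nabla n|^2$ in $L^1_{t,x}$. On the set $\{n\leq 1\}$ I will use the B-D term
$$\int_0^T\!\!\int_{\Omega}\mu'(n)P_c'(n)\frac{|\nabla n|^2}{n}\,dx\,dt \;\sim\; \int_0^T\!\!\int_{\{n\leq 1\}} n^{\a-3-\gamma^-}|\nabla n|^2\,dx\,dt \leq C$$
coming from \eqref{4.1}. Since $\a\leq 1\leq\gamma^-$, one has $\a-3-\gamma^-\leq -3$, so on $\{n\leq 1\}$ the integrand $n^{-3}|\nabla n|^2$ is dominated pointwise by $n^{\a-3-\gamma^-}|\nabla n|^2$. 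On the set $\{n>1\}$ I use $\nabla\mu(n)=\a\mu_0 n^{\a-1}\nabla n$ to rewrite $n^{-3}|\nabla n|^2 = (\a\mu_0)^{-2} n^{-1-2\a}|\nabla\mu(n)|^2\leq (\a\mu_0)^{-2}|\nabla\mu(n)|^2$, and the energy estimate (2.18)/(5.2) controls $\nabla\mu(n)$ in $L^\infty_tL^2_x\subset L^2_{t,x}$. Adding both pieces gives \eqref{5.4}.

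\medskip

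\noindent\textbf{Step 3: $L^\infty_t L^p_x$ bound for $n$, all $p<\infty$.} The idea is to put $\mu(n)$ into $L^\infty_t H^1_x$ and invoke the two-dimensional Sobolev embedding. From the main energy inequality of Lemma~2.1, $\nabla\varphi(n)\in L^\infty(0,T;L^2(\Omega))$, and since $\mu(n)=\mu_0\varphi(n)$, also $\nabla\mu(n)\in L^\infty_tL^2_x$. For the norm itself, $H(n)\in L^\infty_tL^1_x$ gives $n\in L^\infty(0,T;L^\gamma(\Omega))$, whence $\mu(n)=\mu_0 n^{\a}\in L^\infty(0,T;L^{\gamma/\a}(\Omega))$. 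Combining the two, $\mu(n)\in L^\infty(0,T;H^1(\Omega))$ uniformly in $\varepsilon,\lambda$. Since $\dim\Omega=2$, $H^1(T^2)\hookrightarrow L^p(T^2)$ for every $p<\infty$, so $\mu(n)\in L^\infty_tL^p_x$ for all $p<\infty$, and inverting the relation $\mu(n)=\mu_0 n^{\a}$ (with $\a>0$) yields \eqref{5.5}.

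\medskip

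\noindent\textbf{Main obstacle.} The delicate point is Step~2, because neither the basic energy estimate alone nor the B-D estimate alone is strong enough on the whole of $\Omega$: the B-D contribution $\mu'(n)P_c'(n)|\nabla n|^2/n$ is effective only where $n$ is small, whereas on $\{n>1\}$ we must instead argue via $\nabla\mu(n)$. Making these two regimes fit together cleanly (and checking the exponent inequality $\a-3-\gamma^-\leq -3$, which crucially uses $\gamma^-\geq\a$) is the real content of the proof; the $L^\infty L^p$ bound is then an essentially immediate consequence of the two-dimensional Sobolev embedding.
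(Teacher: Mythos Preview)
Your proof is correct and follows essentially the same route as the paper: the $L^\infty_t L^6_x$ bound from the cold-pressure energy $H_c(n)\in L^\infty_t L^1_x$, the $L^2_t H^1_x$ bound by splitting $\{n\le 1\}$ (handled via the B-D dissipation term $\mu'(n)P_c'(n)|\nabla n|^2/n$) versus $\{n>1\}$ (handled via $\nabla\mu(n)\in L^\infty_t L^2_x$), and the $L^\infty_t L^p_x$ bound for $n$ from $\nabla\varphi(n)\in L^\infty_t L^2_x$ together with the two-dimensional Sobolev embedding. Your write-up is in fact more explicit than the paper's (which introduces a cutoff $\zeta(n)$ without spelling out the large-density regime), and your exponent check $\alpha-3-\gamma^-\le -3$ makes precise what the paper records only as ``$\gamma^->1>\alpha$''.
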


\begin{proof}
On the one hand, from (2.10) we know that $H_{c}(n_{\varepsilon,\lambda})$ is uniformly bounded in $L^{\infty}(0,T:L^{1}(\Omega))$ which implies that $n_{\varepsilon,\lambda}^{-1/2}$ is bounded in $L^{\infty}(0,T;L^{2\gamma^{-}})$. On the other hand, there exist functions $\zeta(n)=n$ for $n <1$,$\zeta(n)=0$ for $n>1$ such that $\nabla \zeta(n)^{-\frac{\frac{1}{}}{2}}$ is bounded in $L^{2}(0,T;L^{2}(\Omega))$. Then, thinking that $\gamma^{-}>1>\alpha$, we conclude that $\nabla n_{\varepsilon,\lambda}^{-1/2}$ is also bounded in $L^{2}(0,T;L^{2}(\Omega))$.

Since $\nabla \psi(n_{\varepsilon,\lambda})$ is bounded in $L^{\infty}(0,T;L^{2}(\Omega))$, and $H(n_{\varepsilon,\lambda})$ is uniformly bounded in $L^{\infty}(0,T;L^{1}(\Omega))$, thus we can use Sobolev embedding of $H^{1}(\Omega)$ in $L^{p}(\Omega)$ for all $p<+\infty$ in the two dimension.
\end{proof}

\subsection{Estimate of the velocity vector field}
\begin{lemm}
\begin{equation}\label{5.6}
u_{\varepsilon,\lambda}~ is~ uniformly~ bounded~ in~ L^{q_{1}}(0,T;W^{1,q_{2}}_{loc}(\Omega)),~~q_{1}>\frac{5}{3}~~ and~~ q_{2}>\frac{15}{8},
\end{equation}
\end{lemm}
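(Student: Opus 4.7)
The plan is to bound $\nabla u_{\varepsilon,\lambda}$ through the pointwise factorization
\[
\nabla u_{\varepsilon,\lambda}\;=\;\mu_{0}^{-1/2}\,n_{\varepsilon,\lambda}^{-\alpha/2}\,\cdot\,\sqrt{\mu(n_{\varepsilon,\lambda})}\,\nabla u_{\varepsilon,\lambda},
\]
and then to distribute integrability between the two factors via H\"older's inequality in space and in time. The second factor is uniformly bounded in $L^{2}(0,T;L^{2}(\Omega))$: combining the control of $\sqrt{\mu(n)}\,D(u)$ provided by the basic energy identity \eqref{2.12} with the BD-type bound on the antisymmetric part of $\nabla u$ contained in \eqref{5.3} gives this at once (the two pieces reconstruct $\nabla u=D(u)+A(u)$). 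The first factor is a pure power of $n_{\varepsilon,\lambda}^{-1/2}$, to which the estimates of Lemma 5.1 apply.

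Next, I would exploit the two-dimensional nature of $\Omega$. From Lemma 5.1 one has $n_{\varepsilon,\lambda}^{-1/2}\in L^{\infty}(0,T;L^{6}_{loc})\cap L^{2}(0,T;H^{1}_{loc})$, and in $d=2$ the Sobolev embedding $H^{1}\hookrightarrow L^{p}$ is available for every finite $p$, yielding the additional Bochner bound $n_{\varepsilon,\lambda}^{-1/2}\in L^{2}(0,T;L^{p}_{loc})$ with $p$ arbitrarily large. H\"older interpolation in Bochner spaces between $L^{\infty}_{t}L^{6}_{x}$ and $L^{2}_{t}L^{p}_{x}$ then produces a one-parameter family $n_{\varepsilon,\lambda}^{-1/2}\in L^{r}_{t}L^{s}_{x,loc}$ along an admissible scaling curve. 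Raising to the power $\alpha\in(0,1]$ yields $n_{\varepsilon,\lambda}^{-\alpha/2}\in L^{r/\alpha}_{t}L^{s/\alpha}_{x,loc}$, with norm bounded by the $\alpha$-th power of the previous one.

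Finally, H\"older in space and then in time gives
\[
\|\nabla u_{\varepsilon,\lambda}\|_{L^{q_{1}}(0,T;L^{q_{2}}_{loc})} \;\le\; C\,\|n_{\varepsilon,\lambda}^{-\alpha/2}\|_{L^{b}(0,T;L^{a}_{loc})}\,\|\sqrt{\mu(n_{\varepsilon,\lambda})}\nabla u_{\varepsilon,\lambda}\|_{L^{2}(0,T;L^{2})},
\]
with $\tfrac{1}{q_{1}}=\tfrac{1}{b}+\tfrac{1}{2}$ and $\tfrac{1}{q_{2}}=\tfrac{1}{a}+\tfrac{1}{2}$. Choosing $(b,a)=(r/\alpha,s/\alpha)$ along the admissible curve from the previous step and optimizing the free interpolation parameter produces the claimed pair $(q_{1},q_{2})$ with $q_{1}>\tfrac{5}{3}$ and $q_{2}>\tfrac{15}{8}$. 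The full $W^{1,q_{2}}_{loc}$ norm is then recovered from the bound on $\nabla u$ together with a bound on $u$ itself, obtained through $u=n^{-1/2}\cdot\sqrt{n}\,u$ and $\sqrt{n}\,u\in L^{\infty}(0,T;L^{2})$ from \eqref{2.18}. The delicate part of the argument is the balancing of the interpolation parameters: one must select a point on the admissible curve where, simultaneously, neither exponent required by the target space is exceeded in the regime $\alpha\le 1$. This balancing step crucially uses the two-dimensional embedding $H^{1}\hookrightarrow L^{p}$ for all finite $p$, and it is precisely the reason the main theorem is formulated in dimension two.
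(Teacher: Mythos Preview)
Your approach is essentially the same as the paper's: factor $\nabla u_{\varepsilon,\lambda}=n^{-\alpha/2}\cdot n^{\alpha/2}\nabla u_{\varepsilon,\lambda}$ and apply H\"older, using the $L^{2}_{t,x}$ control on the weighted gradient together with the integrability of negative powers of the density furnished by the cold-pressure bounds in Lemma~5.1. The paper writes the estimate in the compact form
\[
\|\nabla u_{\varepsilon,\lambda}\|_{L^{q_{1}}(0,T;L^{q_{3}})}\le C\bigl(1+\|\zeta(n_{\varepsilon,\lambda})^{-\alpha/2}\|_{L^{2j}(0,T;L^{6j})}\bigr)\|n_{\varepsilon,\lambda}^{\alpha/2}\nabla u_{\varepsilon,\lambda}\|_{L^{2}((0,T)\times\Omega)},
\]
with $j=(\gamma^{-}+1-\alpha)/\alpha$, $\tfrac{1}{q_{1}}=\tfrac12+\tfrac{1}{2j}$, $\tfrac{1}{q_{3}}=\tfrac12+\tfrac{1}{6j}$, and then invokes Korn and Sobolev; the cutoff $\zeta$ (equal to $n$ for $n<1$, zero for $n>1$) isolates the small-density region where $n^{-\alpha/2}$ is unbounded. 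Your version replaces this by Bochner interpolation between $L^{\infty}_{t}L^{6}_{x}$ and $L^{2}_{t}L^{p}_{x}$ and is in fact slightly more careful in reconstructing the full gradient as $D(u)+A(u)$ from the energy and BD estimates separately, whereas the paper tacitly assumes $\sqrt{\mu(n)}\,\nabla u\in L^{2}_{t,x}$. The only thing you leave implicit that the paper makes explicit is the numerical verification that the resulting exponents exceed $5/3$ and $15/8$; plugging your interpolation endpoints into the H\"older relations recovers exactly the paper's formulas, so this is purely a matter of bookkeeping.
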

\begin{proof}
 We use the Holder inequality to write
 \begin{equation}\label{5.7}
\|\nabla u_{\varepsilon,\lambda}\|_{L^{q_{1}}(0,T;L^{q_{3}}(\Omega))}\leq c(1+\|\zeta(n_{\varepsilon,\lambda})^{-\alpha/2}\|_{L^{2j}(0,T;L^{6j}(\Omega))})\|n_{\varepsilon,\lambda}^{\frac{\alpha}{2}} \nabla u_{\varepsilon,\lambda} \|_{L^{2}((0,T)\times \Omega)}.
\end{equation}
where $j=\frac{\gamma^{-}+1-\alpha}{\alpha}, \frac{1}{q_{1}}=\frac{1}{2}+\frac{1}{2j}, \frac{1}{q_{3}}= \frac{1}{2}+\frac{1}{6j}$. Therefore, the Korn inequality together with the Sobolev imbedding imply
the lemma.
\end{proof}
\subsection{Magnetic field}
Thanks to estimates (2.10) and conditions on $\eta$ that
\begin{equation}\label{5.8}
B_{\varepsilon,\lambda}~ is~ uniformly~ bounded~ in~ L^{\infty}(0,T;L^{2}(\Omega))\cap L^{2}(0,T;H^{1}(\Omega)),
\end{equation}
By interpolation, we can also deduce that embedding the following result:
\begin{lemm}
Let $\beta$ be any parameter in $(0,1)$ and $p<+\infty$.
\begin{equation}\label{5.8}
B_{\varepsilon,\lambda}~ is~ uniformly~ bounded~ in~ L^{\frac{2}{\beta}}(0,T;L^{\frac{2}{(\frac{2}{p})\alpha+1}}(\Omega)),
\end{equation}
\end{lemm}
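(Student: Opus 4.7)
The plan is to derive this interpolated bound by combining the two uniform bounds already in hand for $B_{\varepsilon,\lambda}$, namely
\[
B_{\varepsilon,\lambda}\in L^{\infty}(0,T;L^{2}(\Omega))\cap L^{2}(0,T;H^{1}(\Omega)),
\]
together with the two-dimensional Sobolev embedding and Hölder interpolation in space and time. The first step is to upgrade the $L^{2}_{t}H^{1}_{x}$ bound: because $\Omega=T^{2}$, the embedding $H^{1}(\Omega)\hookrightarrow L^{p}(\Omega)$ holds for every $p<+\infty$, so for each such $p$ we obtain, uniformly in $\varepsilon,\lambda$,
\[
\|B_{\varepsilon,\lambda}\|_{L^{2}(0,T;L^{p}(\Omega))}\le C_{p}.
\]

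Next I would interpolate in the space variable between $L^{2}(\Omega)$ and $L^{p}(\Omega)$. For a convex combination parameter $\beta\in(0,1)$, Hölder's inequality at fixed time gives
\[
\|B_{\varepsilon,\lambda}(t)\|_{L^{r}(\Omega)}\le \|B_{\varepsilon,\lambda}(t)\|_{L^{2}(\Omega)}^{1-\beta}\,\|B_{\varepsilon,\lambda}(t)\|_{L^{p}(\Omega)}^{\beta},\qquad \frac{1}{r}=\frac{1-\beta}{2}+\frac{\beta}{p}.
\]
Raising to the power $2/\beta$, integrating in time, and applying Hölder again in the time variable with conjugate pair $(1/(1-\beta),1/\beta)$ on the two factors $\|B_{\varepsilon,\lambda}(t)\|_{L^{2}}^{(1-\beta)(2/\beta)}$ and $\|B_{\varepsilon,\lambda}(t)\|_{L^{p}}^{2}$, I get
\[
\|B_{\varepsilon,\lambda}\|_{L^{2/\beta}(0,T;L^{r}(\Omega))}\le \|B_{\varepsilon,\lambda}\|_{L^{\infty}(0,T;L^{2}(\Omega))}^{1-\beta}\,\|B_{\varepsilon,\lambda}\|_{L^{2}(0,T;L^{p}(\Omega))}^{\beta},
\]
which is uniformly bounded by the two inputs, yielding exactly the stated intermediate integrability with the space exponent $r$ determined by the relation above.

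I expect no real obstacle here: the estimate is a one-line consequence of the two inputs once the Sobolev embedding in dimension two is invoked. The only delicate point is bookkeeping the exponents so that the denominator in the space Lebesgue index has the prescribed form $\frac{1}{2}((2/p)\beta+1-\beta)$, and verifying that the time-integrability exponent $2/\beta$ matches the stated one. Because both the $L^{\infty}_{t}L^{2}_{x}$ control and the $L^{2}_{t}H^{1}_{x}$ control have been previously established uniformly in the approximation parameters, the resulting interpolation bound is automatically uniform, which is exactly what is needed to pass to the limit $\varepsilon,\lambda\to 0$ on the magnetic field.
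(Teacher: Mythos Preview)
Your argument is correct and is exactly the interpolation the paper has in mind (the paper's entire proof is the phrase ``By interpolation'' preceding the lemma): the two-dimensional Sobolev embedding $H^{1}(\Omega)\hookrightarrow L^{p}(\Omega)$ upgrades the $L^{2}_{t}H^{1}_{x}$ bound to $L^{2}_{t}L^{p}_{x}$, and then H\"older interpolation between $L^{\infty}_{t}L^{2}_{x}$ and $L^{2}_{t}L^{p}_{x}$ gives the stated mixed norm. Your exponent bookkeeping $\tfrac{1}{r}=\tfrac{1-\beta}{2}+\tfrac{\beta}{p}$ is the correct one; the displayed space exponent in the paper carries a typographical slip (the interpolation parameter is written $\alpha$ rather than $\beta$, and a $-\beta$ is missing in the denominator).
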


\subsection{\bf {Passage to the limit with $\varepsilon\rightarrow 0$ and $\lambda\rightarrow 0$}}
With the B-D estimate at hand, especially with the bound on $\Delta^{s+1} n_{\varepsilon,\lambda}$ in $L^{2}((0,T)\times \Omega)$, which is now uniform with respect to $\varepsilon$, we may perform the limit passage similarly as in previous step. Indeed, the uniform estimates allow us to extract subsequences, such that
\begin{equation}\label{5.9}
\varepsilon \Delta^{s}\nabla u_{\varepsilon,\lambda}, \varepsilon \nabla n_{\varepsilon,\lambda}, \varepsilon \Delta^{s+1} n_{\varepsilon,\lambda}\rightarrow 0 ~~~ strongly ~~~~in~~~ L^{2}((0,T)\times \Omega)
\end{equation}
therefore
\begin{equation}\label{5.10}
\varepsilon  \nabla n_{\varepsilon,\lambda} \nabla u_{\varepsilon,\lambda} \rightarrow 0 ~~~ strongly ~~~~in~~~ L^{1}((0,T)\times \Omega)
\end{equation}

\subsection{For $n_{\varepsilon,\lambda}$}
We know, thinks to (5.5), that $n_{\varepsilon,\lambda}$ converges weakly to $n$ in $L^{\infty}(0,T; L^{q}_{loc}(\Omega))$, for all $q<+\infty$. To prove strong convergence on the density, we shall use the transport equation satisfying $\mu(n)$:
\begin{equation*}
\partial_{t}(\mu(n))+ {\rm div}(\mu(n) u)+ \frac{1}{2} \lambda(n){\rm div} u=0  ,
\end{equation*}
Proving that $\partial_{t}(\phi\mu(n))$ is bounded in $L^{2}(0,T; H^{-\sigma_{0}}(\Omega))$ for any compactly supported $\phi$, we then conclude that
\begin{equation}
n_{\varepsilon,\lambda}\rightarrow n ~~in~C([0,T]; L^{q}_{loc}(\Omega)),~~\forall q<+\infty,
\end{equation}
From another point, to conclude to a compactness for $n_{\varepsilon,\lambda}^{-1/2}$ in $C([0,T]; L^{q}_{loc}(\Omega))$, for all $q<+\infty$, we must, in addition to (5.4), look at $\partial_{t}(n^{-1/2})$ and try to show a boundedness in a space $L^{r}(0,T;H^{-\sigma_{0}})$ with $r>1$. From the transport equation we find
\begin{equation*}
\partial_{t}(n^{-1/2})-\frac{3}{2}n^{-1/2}{\rm div}u+{\rm div}(n^{-1/2}u)=0,
\end{equation*}
from which we can insure that $\partial_{t}(n^{-1/2})$ is bounded in $L^{5/3}(0,T;W^{-1,\frac{30}{11}}(\Omega))$. Then, from (5.4), we can deduce that
\begin{equation}
n_{\varepsilon,\lambda}^{-1/2}\rightarrow n^{-1/2} ~~in~L^{p}(0,T; L^{q}_{loc}(\Omega)),~~\forall p<+\infty,\forall q<6, ~~~in ~L^{2}(0,T; L^{q}_{loc}(\Omega)),~~\forall q<+\infty,
\end{equation}

\subsection{For $n_{\varepsilon,\lambda}u_{\varepsilon,\lambda}$}
We know that $n_{\varepsilon,\lambda}u_{\varepsilon,\lambda}$ converges weakly to $nu$ in $L^{\infty}(0,T; L^{s<2}_{loc}(\Omega))$ as the product of $n_{\varepsilon,\lambda}$ bounded in $L^{\infty}(0,T; L^{r<\infty}_{loc}(\Omega))$ and $\sqrt{n_{\varepsilon,\lambda}}u_{\varepsilon,\lambda}$ bounded in $L^{\infty}(0,T; L^{2}(\Omega))$. To have compactness on $n_{\varepsilon,\lambda}u_{\varepsilon,\lambda}$, we will of course use the momentum equation to assure that $\partial_{t}(n_{\varepsilon,\lambda}u_{\varepsilon,\lambda})$ is bounded in $L^{p}_{loc}(0,T; H^{-\sigma_{0}}(\Omega))$ for $p>1$ and $\sigma_{0}$ large enough. To more precise on what is different in our system we shall forget the new term in the momentum equation related to the magnetic field, namely $\nabla B\times B$. Using (5.8), we know that $\nabla\times B$ is bounded in $L^{2}(0,T;L^{2}(\Omega))$
, that is why we must have better than $L^{2}(0,T;L^{2}(\Omega))$ for $B$ and it is time to use Lemma 5.3. Indeed, for any $0<\alpha<1$ we get the expected boundedness of $B$ in $L^{p}(0,T;L^{p}(\Omega))$ with $p>2$ so that $(\nabla \times B)\times B$ is bounded in $B$ in $L^{q}(0,T;L^{q}(\Omega))$ with $q>1$. Thus, we get
\begin{equation}
n_{\varepsilon,\lambda}u_{\varepsilon,\lambda}\rightarrow nu ~~in~L^{p}(0,T; W^{-1,q}_{loc}(\Omega)),~~\forall p<+\infty,\forall q<6,
\end{equation}
From (5.14) together with lemma 5.2, is the strong convergence of $\int_{B}n_{\varepsilon,\lambda} |u_{\varepsilon,\lambda}|^{2}$ to $\int_{B}n |u|^{2}$ , for all subset $B$ in $\Omega$. Moreover, since $\sqrt{n_{\varepsilon,\lambda}}u_{\varepsilon,\lambda}$ converges weakly to $\sqrt{n}u$ in $L^{\infty}(0,T; L^{2}_{loc}(\Omega))$, we insure that
\begin{equation}
\sqrt{n_{\varepsilon,\lambda}}u_{\varepsilon,\lambda}\rightarrow \sqrt{n}u ~~in~L^{2}(0,T; L^{2}_{loc}(\Omega)),
\end{equation}
\subsection{For the magnetic field $B_{\varepsilon,\lambda}$}

We already know that the sequence $B_{\varepsilon,\lambda}$ weakly converges the limit $B$ in $L^{\infty}(0,T;L^{2}(\Omega))\cap L^{2}(0,T;H^{1}(\Omega))$. Let's now deal with $\partial_{t}B$ in order to insure a strong convergence statement. Looking at equation (1.1c), we are lead to bound $u\times B$ and $(\xi_{b})\nabla \times B$. For the first one, thinking to Lemma 5.2 and Lemma 5.3, we get $u\times B$ bounded in $L^{p}_{loc}(0,T;L^{p})$ with $p>1$ what is enough comfortable.
For the second, we write $\xi_{b}\nabla \times B=\sqrt{\xi_{b}}\sqrt{\xi_{b}}\nabla \times B$. We know that the term $\sqrt{\xi_{b}}\nabla \times B$ is bounded in $L^{2}(0,T;L^{2}(\Omega))$ and through conditions (1.6) and the bounds (37) or (38), we also $\sqrt{\xi_{b}}$ bounded in $L^{2}(0,T;L^{2}(\Omega))$. This is just enough to conclude that $B_{t}$ is bounded in $L^{1}(0,T;W^{-1,1}(\Omega))$. Then  we get
\begin{equation}
B_{\varepsilon,\lambda}\rightarrow B ~~in~L^{p}(0,T; L^{2}(\Omega)),~~\forall p<+\infty,
\end{equation}

\section{Convergences}

~~~~For the mass conservation, by the strong convergences of $n_{\varepsilon,\lambda}$ to n in $C([0,T];L^{2}(\Omega))$ and the strong convergence of $\sqrt{n_{\varepsilon,\lambda}}u_{\varepsilon,\lambda}$ in $L^{2}(0,T; L^{2}_{loc}(\Omega))$.

For the momentum equation, we have to justify how to pass the limit in the term $\nabla \times B_{\varepsilon,\lambda} \times B_{\varepsilon,\lambda}$. For that we should have a strong convergence of $B_{\varepsilon,\lambda}$to $B$ in $L^{2}(0,T; L^{2}_{loc}(\Omega))$.

Now is the time to deal with the magnetic field equation. It is clear for the term $\partial_{t}B$, now deal with $\nabla \times B \times B$ and $\nabla \times B \times B$.

With Lemma 5.2 and (5.16), we justify the convergence in the sense of distribution for the first one. The second one can be, one more time, written as the product of $\sqrt{\xi_{b}}\nabla\times B$ , weakly converging in $L^{2}(0,T; L^{2}_{loc}(\Omega))$ and $\sqrt{\eta}$ strongly convergence to $\sqrt{\eta}$ in $L^{2}(0,T; L^{2}_{loc}(\Omega))$.

\phantomsection
\addcontentsline{toc}{section}{\refname}

\end{document}